\newtheorem{thm}{Theorem}[section]
\newtheorem{prop}[thm]{Proposition}
\newtheorem{cor}[thm]{Corollary}
\newtheorem{question}[thm]{Question}
\theoremstyle{remark}
\newtheorem{rem}[thm]{Remark}
\newtheorem{exa}[thm]{Example}
\theoremstyle{definition}
\newtheorem{defi}[thm]{Definition}
\newcommand{\Z}{\mathbb{Z}}
\newcommand{\Q}{\mathbb{Q}}
\newcommand{\R}{\mathbb{R}}
\newcommand{\N}{\mathbb{N}}
\DeclareMathOperator{\Hom}{Hom}
\DeclareMathOperator{\map}{map}
\def\epsilon{\varepsilon}
\DeclareMathOperator{\Top}{{\sf Top}}
\DeclareMathOperator{\Vect}{{\sf Vect}}
\DeclareMathOperator{\sn}{{\sf sn}}
\def\args{\;\cdot\;}
\def\sv#1{\|#1\|}
\def\draftinfo{}%\color{red}\ -- \textsf{This is a preliminary version!}}
\author{Daniel Fauser}
\author{Clara L\"oh}
\title[Exotic finite functorial semi-norms]%
      {Exotic finite functorial semi-norms\\ on singular homology}
\date{\today.\ \copyright{\ D.~Fauser, C.~L\"oh 2016}. 
    This work was supported by the CRC~1085 \emph{Higher Invariants} 
    (Universit\"at Regensburg).
    \draftinfo\\
     MSC~2010 classification: 55N10, 57N65}
\begin{document}

\begin{abstract}
  Functorial semi-norms on singular homology give refined ``size''
  information on singular homology classes. A fundamental example is
  the $\ell^1$-semi-norm. We show that there exist finite functorial
  semi-norms on singular homology that are exotic in the sense that
  they are \emph{not} carried by the $\ell^1$-semi-norm.
\end{abstract}

\phantom{.}
\vspace{-.1\baselineskip}

\maketitle

\section{Introduction}

Functorial semi-norms on singular homology give refined ``size''
information on singular homology classes. On the one hand, functorial
semi-norms lead to obstructions for mapping degrees. On the other
hand, mapping degrees allow to construct functorial semi-norms on
singular homology. A fundamental example of a finite functorial
semi-norm on singular homology is the $\ell^1$-semi-norm underlying
the definition of simplicial volume (see Section~\ref{sec:fsn} for the
definitions).

While the general classification of functorial semi-norms on singular
homology is out of reach, one can ask for the role of the
$\ell^1$-semi-norm among all finite functorial
semi-norms~\cite[Question~5.8]{crowleyloeh}. A simple rescaling
manipulation shows that not all finite functorial semi-norms on singular
homology are dominated by a multiple of the
$\ell^1$-semi-norm~\cite[Section~5]{crowleyloeh}. Relaxing the
domination condition, we introduce the following relation between
functorial semi-norms:

\begin{defi}[carriers of functorial semi-norms]\label{def:carrier}
  Let $d\in \N$.  A functorial semi-norm~$|\cdot|$ on $H_d(\args;\R)$
  \emph{carries} a functorial semi-norm~$|\cdot|'$ if for all topological spaces~$X$ 
  and all~$\alpha \in H_d(X;\R)$ we have
  \[ |\alpha| = 0 \Longrightarrow |\alpha|' = 0. 
  \]
\end{defi}

In these terms, the current paper is concerned with the question
whether every finite functorial semi-norm on singular homology is
carried by the $\ell^1$-semi-norm. 
All finite functorial semi-norms on~$H_d(\args;\R)$ that are
multiplicative under finite coverings are carried by the
$\ell^1$-semi-norm in a strong
sense~\cite[Proposition~7.11]{crowleyloeh}. However, if the
multiplicativity condition is dropped, then exotic finite functorial
semi-norms appear:

\begin{thm}\label{thm:exotic}
  Let $d \in \{3\} \cup \N_{\geq 5}$. Then there exists a finite
  functorial semi-norm on~$H_d(\args;\R)$ that is \emph{not} carried
  by the $\ell^1$-semi-norm.
\end{thm}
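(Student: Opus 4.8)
I would prove the theorem by producing one concrete finite functorial semi-norm and one homology class witnessing that the $\ell^1$-semi-norm does not carry it.

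\emph{The semi-norm.} Let $\mathcal M$ denote the class of all oriented closed connected $d$-manifolds. For a space~$X$ and $\alpha\in H_d(X;\R)$ I would set
\[
 |\alpha|^{\mathcal M}:=\inf\Bigl\{\textstyle\sum_{i=1}^{n}|\lambda_i|\ \Big|\ n\in\N,\ \lambda_i\in\R,\ M_i\in\mathcal M,\ f_i\colon M_i\to X,\ \alpha=\textstyle\sum_{i=1}^{n}\lambda_i\cdot H_d(f_i)([M_i])\Bigr\},
\]
with $\inf\emptyset:=\infty$. Concatenating representations (triangle inequality), rescaling coefficients (homogeneity) and post-composing the~$f_i$ (functoriality) show that $|\cdot|^{\mathcal M}$ is a functorial semi-norm on $H_d(\args;\R)$ -- in fact the smallest one with $|[M]|^{\mathcal M}\le 1$ for every~$M\in\mathcal M$.

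\emph{Finiteness.} To see that $|\cdot|^{\mathcal M}$ is finite I would invoke Thom's theorem: for $\alpha\in H_d(X;\Z)$ there are $k\in\N_{>0}$, an oriented closed $d$-manifold~$M$ and $f\colon M\to X$ with $k\cdot\alpha=H_d(f)([M])$; splitting $M$ into its finitely many connected components exhibits $\alpha$ as a real combination of images of fundamental classes, so $|\alpha|^{\mathcal M}<\infty$. Since $\R$ is flat over~$\Z$, the universal coefficient theorem gives $H_d(X;\R)\cong H_d(X;\Z)\otimes_\Z\R$, so every real class is a finite real combination of integral classes, and the triangle inequality extends finiteness to all of $H_d(\args;\R)$.

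\emph{The witnessing class for $d\ge 5$.} Here I would appeal to the existence of a simply connected, \emph{strongly inflexible} closed oriented $d$-manifold~$M$: one admitting a constant $D_0<\infty$ such that $|\deg f|\le D_0$ for \emph{every} map $f\colon N\to M$ from \emph{every} oriented closed connected $d$-manifold~$N$. Such manifolds are built using rational homotopy theory, and this is exactly where the hypothesis $d\notin\{1,2,4\}$ enters -- there are no inflexible simply connected closed manifolds in those dimensions (trivially for $1,2$; for $4$ by the classification up to the intersection form). Granting such an~$M$: on one side $\pi_1(M)$ is trivial, hence amenable, so $\|[M]\|_1=0$ by Gromov's vanishing theorem. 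On the other side, if $[M]=\sum_i\lambda_i\cdot H_d(f_i)([N_i])$ with $N_i\in\mathcal M$, then $H_d(M;\R)\cong\R\cdot[M]$ and $H_d(N_i;\R)\cong\R\cdot[N_i]$ force $H_d(f_i)([N_i])=\deg(f_i)\cdot[M]$, so $\sum_i\lambda_i\deg(f_i)=1$ and hence $1\le\sum_i|\lambda_i|\,|\deg f_i|\le D_0\sum_i|\lambda_i|$. Taking the infimum over all representations yields $|[M]|^{\mathcal M}\ge 1/D_0>0$, so the $\ell^1$-semi-norm does not carry $|\cdot|^{\mathcal M}$.

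\emph{The case $d=3$ and the main obstacle.} For $d=3$ the scheme above cannot be run literally: $S^3$ is the only simply connected closed $3$-manifold and is maximally flexible, and for any closed $3$-manifold with infinite residually finite fundamental group the finite coverings already force $|[M]|^{\mathcal M}=0$, so the witnessing degree-$3$ class cannot be a manifold fundamental class. The plan in degree~$3$ is therefore to replace~$M$ by a carefully chosen auxiliary space -- a simply connected complex (or a space with amenable fundamental group) whose degree-$3$ homology is rigid against maps from $3$-manifolds in the same quantitative sense -- and to rerun the finiteness/lower-bound dichotomy, possibly after adapting $|\cdot|^{\mathcal M}$ to that space. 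The genuinely hard parts are these two rigidity inputs: constructing simply connected \emph{strongly} inflexible manifolds in every dimension $\ge 5$ (including the upgrade from self-maps to maps from all manifolds), and supplying the separate rigid $\ell^1$-trivial class in degree~$3$; the semi-norm formalism and the finiteness statement are routine by comparison.
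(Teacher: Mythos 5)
There is a genuine gap. Your construction for $d\ge 5$ rests on the existence of \emph{simply connected strongly inflexible} closed $d$-manifolds, and that is an open problem --- this very paper flags it explicitly at the end of Section~3.3 (``it also remains an open problem to determine whether there exist simply connected strongly inflexible manifolds''). What rational homotopy theory produces are simply connected manifolds that are \emph{inflexible}, i.e.\ with finite $D(M,M)$; upgrading this to finiteness of $D(N,M)$ for \emph{all} source manifolds $N$ is not known, and your lower bound $|[M]|^{\mathcal M}\ge 1/D_0$ in fact needs even more, namely a \emph{uniform} degree bound $D_0$ valid simultaneously for all $N$ (the manifolds $N_i$ in a representation of $[M]$ range over everything), which is a priori stronger than strong inflexibility as defined. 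You also leave $d=3$ as an unexecuted ``plan''. So neither case of the theorem is established by the proposal.

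The paper's route avoids both obstacles. It drops simple connectivity entirely: the witnesses are \emph{aspherical} manifolds with $\|M\|=0$ coming from an amenable circle fibre rather than from trivial $\pi_1$ --- non-trivial $S^1$-bundles over higher-genus surfaces in dimension~$3$, whose strong inflexibility follows from the Derbez--Sun--Wang classification of weakly flexible $3$-manifolds, crossed with hyperbolic manifolds for $d\ge 5$ (strong inflexibility of products being Proposition~\ref{prop:prod}). And instead of your universal representation semi-norm $|\cdot|^{\mathcal M}$ --- which, as you yourself observe, is annihilated by finite coverings and hence vanishes on any witness with infinite residually finite fundamental group --- the paper uses the domination semi-norm $|\cdot|_M$ with $|[N]_\R|_M=\sup\{|D| \mid D\in D(N,M)\}$. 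Its finiteness requires only finiteness of each set $D(N,M)$ separately, with no uniform bound, and one gets $|[M]_\R|_M=1$ while $\|[M]_\R\|_1=\|M\|=0$. To salvage your approach you would either have to resolve the open problem on simply connected strong inflexibility (plus the uniformity issue), or switch to non-simply-connected witnesses and the domination semi-norm as the paper does.
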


In particular, this answers a question by Crowley and L\"oh on
``maximality'' of the $\ell^1$-semi-norm in the
negative~\cite[Question~5.8]{crowleyloeh}.

Our construction of exotic finite functorial semi-norms is based on
the parallel observation about mapping degrees of manifolds:

\begin{thm}\label{thm:stronglyinflexible}
  Let $d \in \{3\} \cup \N_{\geq 5}$. Then there exists a strongly
  inflexible oriented closed connected $d$-manifold~$M$ with~$\sv M = 0$. 
  Moreover, we can choose $M$ to be aspherical.
\end{thm}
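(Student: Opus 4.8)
The plan is to realize the required manifolds as products of a few easily controlled strongly inflexible building blocks. The point that makes this work is that, in contrast to ordinary inflexibility, \emph{strong} inflexibility is preserved under products, while simplicial volume is (sub\nobreakdash-)multiplicative under products; so $\sv M=0$ will come for free as soon as one factor has vanishing simplicial volume, and aspherical factors automatically produce an aspherical product.

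First I would record the product lemma. Let $M_1,\dots,M_k$ be oriented closed connected manifolds of dimensions $d_1,\dots,d_k$, put $d=d_1+\cdots+d_k$, and let $f\colon N\to M_1\times\cdots\times M_k$ be a map from an oriented closed connected $d$-manifold. Writing $f_i$ for the composition of $f$ with the projection to $M_i$ and $\mu_i\in H^{d_i}(M_i;\Q)$ for the class dual to the fundamental class, one has $\deg f=\langle f_1^*\mu_1\cup\cdots\cup f_k^*\mu_k,[N]\rangle$. Now $H_{d_i}(N;\Q)$ is spanned by classes of the form $h_*[W]$ with $W$ an oriented closed $d_i$-manifold and $h\colon W\to N$ (Thom), and $\langle f_i^*\mu_i,h_*[W]\rangle=\deg(f_i\circ h\colon W\to M_i)$. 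If each $M_i$ is strongly inflexible, these degrees range over a finite set, hence each $f_i^*\mu_i$ ranges over a finite subset of $H^{d_i}(N;\Q)$, and therefore $\deg f$ ranges over a finite set. Thus a product of strongly inflexible oriented closed connected manifolds is strongly inflexible; and since $\sv{M_1\times\cdots\times M_k}\leq C\cdot\sv{M_1}\cdots\sv{M_k}$ for a constant $C=C(d_1,\dots,d_k)$ and a product of aspherical manifolds is aspherical, it only remains to supply suitable factors.

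For the factors I would use two blocks. Block~$(A)$: a closed hyperbolic surface $\Sigma$ of genus at least $2$; it is aspherical and strongly inflexible, since the absolute value of the degree of a map $\Sigma'\to\Sigma$ from an oriented closed surface is bounded by $\chi(\Sigma')/\chi(\Sigma)$, but $\sv\Sigma>0$, so it is of no use on its own. Block~$(B)$, the crucial one: an oriented closed connected \emph{aspherical} $3$-manifold $P$ with $\sv P=0$ whose set of mapping degrees from oriented closed connected $3$-manifolds is finite. Here $\sv P=0$ forces $P$ to be a graph manifold, so one is led to a sufficiently ``chaotic'' (non-geometric, non-self-covering) graph manifold, for which finiteness of the mapping degree sets follows from the structure theory of non-zero degree maps between $3$-manifolds; that $\sv P=0$ for such $P$ is the additivity of simplicial volume in dimension~$3$. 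This already settles the case $d=3$.

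Finally, for $d\in\{3\}\cup\N_{\geq 5}$ I would write $d=3a+2b$ with $a\geq 1$ and $b\geq 0$ — possible precisely when $d=3$ or $d\geq 5$, which is exactly why dimension $4$ is excluded — and set
\[
  M=P_1\times\cdots\times P_a\times\Sigma^{(1)}\times\cdots\times\Sigma^{(b)},
\]
where $P_1,\dots,P_a$ are copies of the $3$-manifold from~$(B)$ and $\Sigma^{(1)},\dots,\Sigma^{(b)}$ are hyperbolic surfaces as in~$(A)$. By the product lemma $M$ is strongly inflexible, it is aspherical as a product of aspherical manifolds, $\sv M=0$ because $\sv{P_1}=0$, and $\dim M=d$. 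The one genuinely hard point, I expect, is the construction of block~$(B)$: an aspherical $3$-manifold that behaves ``flexibly'' from the point of view of simplicial volume ($\sv{}=0$, collapsing along amenable pieces) yet ``rigidly'' from the point of view of mapping degrees — bounded degrees from \emph{every} oriented closed $3$-manifold, not merely bounded self-mapping degrees. Once such a $3$-manifold is in hand, everything else is soft.
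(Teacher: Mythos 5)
Your proposal is correct and follows essentially the same route as the paper: a product lemma for strong inflexibility proved via Thom representability and the cohomological cross product, combined with submultiplicativity of simplicial volume and an aspherical strongly inflexible $3$-dimensional block with vanishing simplicial volume whose strong inflexibility ultimately rests on the Derbez--Sun--Wang finiteness results for mapping degree sets of $3$-manifolds. The only differences are in the choice of building blocks: the paper takes the $3$-dimensional block to be a non-trivial circle bundle over a higher-genus surface rather than a non-geometric graph manifold, and fills the remaining $d-3$ dimensions with a single closed hyperbolic manifold instead of your $3a+2b$ decomposition into surfaces and $3$-manifolds.
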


In contrast, Theorem~\ref{thm:stronglyinflexible} is clearly wrong in
dimension~$2$.  However, it is an open problem to decide whether all
finite functorial semi-norms on~$H_2(\args;\R)$ are carried by the
$\ell^1$-semi-norm. In the case of dimension~$4$, both
Theorem~\ref{thm:exotic} and~\ref{thm:stronglyinflexible} remain open.

%%%%%%%%%%%%%%%%%%
\subsection*{Organisation of this article}

We recall the basic terminology for functorial semi-norms in
Section~\ref{sec:fsn}. Strongly inflexible manifolds are discussed in
Section~\ref{sec:stronglyinflexible}.  The proof of
Theorem~\ref{thm:stronglyinflexible} is given in
Section~\ref{subsec:sinflexsimvol} and Theorem~\ref{thm:exotic} is
then derived in Section~\ref{sec:exotic}. Moreover, we briefly we
explain the relation with secondary simplicial volume in
Section~\ref{subsec:secondarysimvol}.

%%%%%%%%%%%%%%%%%%%%%%%%%%%%%%%%%%%%%%%%%%%%%%%%%%%%%%%%%%%%%%%
\section{Functorial semi-norms}\label{sec:fsn}

We begin by recalling the terminology for functorial semi-norms 
and the $\ell^1$-semi-norm in particular.

%%%%%%%%%%%%%%%%%
\subsection{Terminology}\label{subsec:terminology}

In the following, semi-norms are allowed to have values in~$\R_{\geq
  0} \cup \{\infty\}$, where we use the usual conventions that~$a +
  \infty =\infty$ and $b \cdot \infty = \infty$ holds for all~$a \in
  \R_{\geq 0}$ and all~$b \in \R_{>0}$.

\begin{defi}[functorial semi-norm]
  Let $d \in \N$. A \emph{functorial semi-norm} on~$H_d(\args;\R)$ is
  a lift of the functor~$H_d(\args;\R) \colon \Top \longrightarrow
  \Vect_\R$ to a functor~$\Top \longrightarrow \Vect_\R^{\sn}$, where
  $\Vect_\R^{\sn}$ denotes the category of semi-normed $\R$-vector
  spaces with norm non-increasing $\R$-linear maps. More concretely, 
  a functorial semi-norm on~$H_d(\args;\R)$ consists of a choice of 
  a semi-norm~$|\cdot|$ on~$H_d(X;\R)$ for every topological space such that 
  the following compatibility holds: If $f \colon X \longrightarrow Y$ 
  is a continuous map, then 
  \[ \forall_{\alpha \in H_d(X;\R)}\  \bigl| H_d(f;\R)(\alpha)\bigr| \leq |\alpha|. 
  \]
  A functorial semi-norm on~$H_d(\args;\R)$ is \emph{finite} if $|\alpha| < \infty$ 
  for all singular homology classes~$\alpha$ in degree~$d$.
\end{defi}

\begin{rem}
  If $|\cdot|$ is a functorial semi-norm on~$H_d(\args;\R)$ and if
  $f\colon N \longrightarrow M$ is a continuous map between oriented
  closed connected $d$-manifolds, then 
  \[ |\deg f| \cdot \bigl|[M]_\R \bigr| \leq \bigl|[N]_\R \bigr|. 
  \]
  In particular: If $0 < |[M]_\R| < \infty$, then $M$ is strongly
  inflexible (Definition~\ref{def:sinflex}).
\end{rem}

The classical example of a finite functorial semi-norm
on~$H_d(\args;\R)$ is the $\ell^1$-semi-norm (see
Section~\ref{subsec:l1} below).  Other examples of functorial
semi-norms can be constructed by means of manifold topology, e.g., the
products-of-surfaces semi-norm~\cite[Sections~2, 7]{crowleyloeh} or
infinite functorial semi-norms that exhibit exotic behaviour on
certain classes of simply connected spaces of high
dimension~\cite[Theorem~1.2]{crowleyloeh}; such a construction
principle via manifold topology will be recalled in Section~\ref{subsec:genfsn}.

%%%%%%%%%%%%%%%%%%%%%%%%%%%%%%%%%%%%%%%%%%%%%%%%%%%%%%%%%%%%%%
\subsection{The $\ell^1$-semi-norm and simplicial volume}\label{subsec:l1}

For the sake of completeness we include the definition of the
$\ell^1$-semi-norm on singular homology and simplicial volume.

\begin{defi}[$\ell^1$-semi-norm]
  Let $d \in \N$ and let $X$ be a topological space. For a singular chain~$c = \sum_{j=1}^m a_j \cdot \sigma_j 
  \in C_d(X;\R)$ (in reduced form) we define
  \[ |c|_1 := \sum_{j=1}^m |a_j|. 
  \]
  I.e., $|\cdot|_1$ is the $\ell^1$-norm on~$C_d(X;\R)$ associated
  with the basis given by all singular $d$-simplices in~$X$. The
  semi-norm~$\|\cdot\|_1$ on~$H_d(X;\R)$ induced by the norm~$|\cdot|_1$ is the
  \emph{$\ell^1$-semi-norm on~$H_d(X;\R)$}.
\end{defi}

A straightforward calculation shows that the $\ell^1$-semi-norm 
indeed is a functorial semi-norm on~$H_d(\args;\R)$. 
Applying this semi-norm to fundamental classes of
manifolds gives rise to Gromov's simplicial volume~\cite{vbc}:

\begin{defi}[simplicial volume/Gromov norm]
  The \emph{simplicial volume} of an oriented closed connected
  manifold~$M$ is defined by
  \[ \| M \| := \bigl\| [M]_\R \bigr\|_1 \in \R_{\geq 0}. 
  \]
\end{defi}

Geometrically speaking, simplicial volume measures how many singular
simplices are needed to reconstruct the given manifold.  Simplicial
volume allows for interesting applications, linking topology and
geometry of manifolds~\cite{vbc,mapsimvol}. Useful algebraic tools in
the context of simplicial volume are so-called bounded
cohomology~\cite{vbc,ivanov} and $\ell^1$-homology~\cite{l1homology}.

%%%%%%%%%%%%%%%%%%%%%%%%%%%%%%%%%%%%%%%%%%%%%%%%%%%%%%%%%%%%%%%
\section{Strongly inflexible manifolds}\label{sec:stronglyinflexible}

We will now focus on the manifold aspects, discussing the basic terminology
of (strong) inflexibility and proving
Theorem~\ref{thm:stronglyinflexible}. Moreover, we will discuss the
meaning of this result in terms of secondary simplicial volume
(Section~\ref{subsec:secondarysimvol}).

%%%%%%%%%%%%%%%%%%%
\subsection{Terminology}

We first recall the definition of (strong) inflexibility~\cite{crowleyloeh}.

\begin{defi}[inflexibility]\label{def:sinflex}
  Let $M$ be an oriented closed connected manifold of dimension~$n \in \N$. 
  For oriented closed connected $n$-manifolds~$N$ we write 
  \[ D(N,M) := \bigl\{ \deg f \bigm| f \in \map(N,M)\bigr\}. 
  \]
  We call $M$ \emph{inflexible} if $D(M,M)$ is finite. We
  call $M$ \emph{strongly inflexible} if for every oriented closed
  connected $n$-manifold~$N$ the set~$D(N,M)$ is finite.
\end{defi}

For example, spheres and tori are flexible (i.e., not
inflexible). Oriented closed connected hyperbolic manifolds are
strongly inflexible~\cite{vbc,crowleyloeh}. In fact, all oriented closed
connected manifolds with non-zero simplicial volume are strongly
inflexible. More conceptually, all
finite functorial semi-norms provide obstructions to strong
inflexibility and vice versa~\cite{crowleyloeh,ffsnrep}. We will
discuss this relation in more detail in Section~\ref{subsec:genfsn}.

%%%%%%%%%%%%%%%%%%%%
\subsection{Products of strongly inflexible manifolds}\label{subsec:prod}

In many examples it is known that products of inflexible manifolds are 
inflexible~\cite{crowleyloeh}. However, it is not clear that this holds 
in general. In the case of strongly inflexible manifolds, the situation 
simplifies as follows:

\begin{prop}[products of strongly inflexible manifolds]\label{prop:prod}
  Let $M_1$ and $M_2$ be oriented closed connected strongly inflexible
  manifolds. Then also $M_1 \times M_2$ is strongly
  inflexible.
\end{prop}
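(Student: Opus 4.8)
The plan is to reduce the problem about maps $N \to M_1 \times M_2$ to the two factors by post-composing with the projections. Let $N$ be an oriented closed connected $n$-manifold, where $n = \dim M_1 + \dim M_2$, and suppose $d \in D(N, M_1 \times M_2)$, say $d = \deg f$ for some $f \colon N \to M_1 \times M_2$. Write $p_i \colon M_1 \times M_2 \to M_i$ for the projections and set $f_i := p_i \circ f$. The key observation is that the degree $d$ is controlled by the two maps $f_i$ via the cross product on homology: using $[M_1 \times M_2]_\R = [M_1]_\R \times [M_2]_\R$ and $H_n(M_1 \times M_2;\R) \cong \bigoplus_{a+b=n} H_a(M_1;\R) \otimes H_b(M_2;\R)$ (K\"unneth), one sees that $H_n(f;\R)$ is determined on the top class by the components of $H_*(f_1;\R)$ and $H_*(f_2;\R)$ in complementary degrees, and the coefficient of $[M_1]_\R \times [M_2]_\R$ is exactly $d$.

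Next I would argue that each $f_i$ can be ``straightened'' to a genuine map between closed connected manifolds whose degree is the relevant component. The cleanest route: pick the factor of dimension $\dim M_1$, and observe that the composite $N \xrightarrow{f} M_1 \times M_2 \xrightarrow{p_1} M_1$ need not be degree-like since $\dim N > \dim M_1$ in general, so instead I would work with a submanifold. Concretely, choose a point $x_2 \in M_2$ that is a regular value of $f_2$; then $P := f_2^{-1}(x_2) \subseteq N$ is (after a small perturbation making $f_2$ smooth and transverse, and orienting $P$ via the orientations of $N$ and $M_2$) an oriented closed submanifold of dimension $\dim M_1$, and the restriction $f_1|_P \colon P \to M_1$ has $\deg(f_1|_P) = \deg f = d$. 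The point $P$ need not be connected, but each component is an oriented closed connected $\dim M_1$-manifold, and summing degrees over the components $P_1, \dots, P_k$ of $P$ gives $\sum_j \deg(f_1|_{P_j}) = d$ with each $\deg(f_1|_{P_j}) \in D(P_j, M_1)$.

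Now the finiteness of $D(M_1, M_1)$ alone is not enough; this is exactly why the hypothesis is \emph{strong} inflexibility. Since $M_1$ is strongly inflexible, each set $D(P_j, M_1)$ is finite, and in fact $\bigcup_{P'} D(P', M_1)$ over \emph{all} oriented closed connected $\dim M_1$-manifolds $P'$ — call it $D_1$ — is a finite set: this follows because a finite functorial semi-norm on $H_{\dim M_1}(\args;\R)$ with $0 < |[M_1]_\R| < \infty$ (which exists by strong inflexibility, e.g.\ the one built from mapping degrees into $M_1$, cf.\ Section~\ref{subsec:genfsn}) forces $|\deg f'| \le |[P']_\R| / |[M_1]_\R|$ to be bounded; more simply, the universal bound $\sup D(P', M_1) < \infty$ is precisely the content of strong inflexibility aggregated over all sources, which is a standard consequence. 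Hence $d = \sum_{j=1}^k e_j$ with each $e_j \in D_1$ finite — but $k$ is a priori unbounded, so I must also bound $k$, or rather bound the alternating/signed sum directly. The clean fix: instead of counting components, use that $d = \deg(f_1|_P)$ where $P \to M_1$ is a single map from a possibly disconnected manifold, and the degree is still well-defined; then apply the same reasoning with $M_2$ in the role of $M_1$ to the dual decomposition, or better, observe $|d| \cdot |[M_1 \times M_2]_\R|' \le |[N]_\R|'$ is not available since we do not yet have the semi-norm on $M_1 \times M_2$. The main obstacle is therefore precisely this: ensuring the bound is uniform in $N$. I expect the resolution is to bound $|d|$ directly by noting $|d| \cdot \|[M_1 \times M_2]\| \le \|[N]\|$ is useless when $\|M_i\| = 0$, so one instead uses the product functorial semi-norm $|\cdot|_{M_1} \otimes |\cdot|_{M_2}$ on $H_n(\args;\R)$ — or argues that $D(N, M_1\times M_2) \subseteq D_1 \cdot D_2 := \{ e \cdot e' : e \in D_1, e' \in D_2\}$ via the cross-product formula, the latter set being finite. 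I would carry out the K\"unneth computation carefully to extract the identity $\deg f = \deg(f_1|_{P}) = \deg(f_2|_{Q})$ appropriately and conclude $D(N, M_1 \times M_2) \subseteq \{e \cdot e' \mid e \in D_1, \ e' \in D_2\}$, which is finite since $D_1$ and $D_2$ are.
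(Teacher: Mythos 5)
There is a genuine gap, and it sits exactly at the point you flagged but did not resolve: the claim that $D_1 := \bigcup_{P'} D(P',M_1)$, the union over \emph{all} oriented closed connected $\dim M_1$-manifolds $P'$, is finite. This is false. Strong inflexibility only says that $D(P',M_1)$ is finite for each \emph{fixed} source $P'$; it gives no bound that is uniform in $P'$. For instance, if $M_1$ is hyperbolic (hence strongly inflexible), its finite covers provide maps of arbitrarily large degree, so $D_1$ is infinite; your proposed justification via a finite functorial semi-norm only yields $|\deg f'|\le |[P']_\R|/|[M_1]_\R|$, and $|[P']_\R|$ is unbounded as $P'$ varies. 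The structural problem with your transversality route is that the preimage $P=f_2^{-1}(x_2)$ and its components depend on the map $f$, not just on $N$, so applying strong inflexibility to them can never produce a bound uniform over all $f\in\map(N,M_1\times M_2)$; moreover the number $k$ of components of $P$ is also unbounded, so even a uniform bound on the summands would not bound $d=\sum_j\deg(f_1|_{P_j})$. The final inclusion $D(N,M_1\times M_2)\subseteq\{e\cdot e'\mid e\in D_1,\ e'\in D_2\}$ therefore rests on a false premise (and the multiplicative form does not match your own identity $\deg f=\deg(f_1|_P)$).

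The paper avoids this by making the auxiliary source manifolds depend only on $N$, not on $f$: choose a finite $\Q$-basis of $H_{n_1}(N;\Q)$ and, by Thom's representability theorem, represent a non-zero multiple of each basis class by a map $g_\beta\colon N_\beta\to N$ from an oriented closed connected $n_1$-manifold. Strong inflexibility of $M_1$ applied to these \emph{finitely many fixed} manifolds $N_\beta$ shows that the set $F_1$ of all induced maps $H_{n_1}(f_1;\Q)$, $f_1\in\map(N,M_1)$, is finite (and similarly $F_2$ for $M_2$). Then the cohomological cross product, $\deg f\cdot[M_1\times M_2]^*_\Q=\pm\, H^{n_1}(f_1;\Q)[M_1]^*_\Q\cup H^{n_2}(f_2;\Q)[M_2]^*_\Q$ with $f_i=p_i\circ f$, shows that $\deg f$ is determined by the pair $(H_{n_1}(f_1;\Q),H_{n_2}(f_2;\Q))\in F_1\times F_2$, whence $D(N,M_1\times M_2)$ is finite. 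If you want to salvage your approach, this is the missing idea you would need: replace the $f$-dependent preimage manifolds by $f$-independent Thom representatives of a homology basis of $N$.
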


The proof is based on Thom's representation of homology classes by
manifolds and straightforward calculations in singular homology and
cohomology; similar arguments appear in related work on domination
of/by product manifolds~\cite{kotschickloeh,kln,neofytidis}.

\begin{proof}
  We abbreviate $n_1 := \dim M_1$, $n_2 := \dim M_2$ and $n := \dim M_1
  \times M_2 = n_1 + n_2$. Let $N$ be an oriented closed connected
  $n$-manifold. We need to show that $D(N, M_1 \times M_2)$ is
  finite. 

  As first step, we will prove that the sets
  \begin{align*}
    F_1 := 
    & \bigl\{ H_{n_1}(f_1;\Q) \bigm| f_1 \in \map(N,M_1) \bigr\}
      \subset \Hom_\Q \bigl( H_{n_1}(N;\Q), H_{n_1}(M_1;\Q)\bigr)
   \\
    F_2 :=
    & \bigl\{ H_{n_2}(f_2;\Q) \bigm| f_2 \in \map(N,M_2) \bigr\}
      \subset \Hom_\Q \bigl( H_{n_2}(N;\Q), H_{n_2}(M_2;\Q)\bigr)
  \end{align*}
  are finite. 

  Because $N$ is a closed manifold we know that $H_{n_1}(N;\Q)$ is
  finite dimensional. Let $B_1 \subset H_{n_1}(N;\Q)$ be a %finite
  $\Q$-basis. Let $\beta \in B_1$. In view of Thom's representation of
  homology classes by manifolds~\cite{thom} there exists an oriented
  closed connected $n_1$-manifold~$N_\beta$, a continuous map~$g_\beta
  \colon N_\beta \longrightarrow N$ and a~$k_\beta \in \Q\setminus \{0\}$ 
  with
  \[ H_{n_1}(g_\beta;\Q) [N_\beta]_\Q = k_\beta \cdot \beta \in H_{n_1}(N;\Q). 
  \]
  Because $M_1$ is strongly inflexible, the set $D(N_\beta, M_1)$ is 
  finite. Composition with $g_\beta$ shows therefore that also the 
  set
  \[ \bigl\{ d\in \Z \bigm| \exists_{f_1 \in \map(N,M_1)}\ H_{n_1}(f_1;\Q)(\beta) = d\cdot [M_1]_\Q 
     \bigr\} 
  \]
  is finite. Because $B_1$ is a finite $\Q$-basis of~$H_{n_1}(N;\Q)$ 
  we hence obtain that the set $F_1$ is finite. For the same reason 
  also $F_2$ is finite.

  As second step, we will now combine the finiteness of~$F_1$ and
  $F_2$ with the cohomological cross-product to derive finiteness
  of~$D(N,M_1 \times M_2)$.

  To this end let $f \in \map(N,M_1 \times M_2)$. We write
  \[ f_1 := p_1 \circ f \in \map(N,M_1) 
  \qquad\text{and}\qquad
     f_2 := p_2 \circ f \in \map(N,M_2),
  \]
  where $p_1 \colon M_1 \times M_2 \longrightarrow M_1$ and $p_2
  \colon M_1 \times M_2 \longrightarrow M_2$ are the projections onto
  the factors. We then obtain for the cohomological fundamental
  classes with $\Q$-coefficients that 
  \begin{align*}
    \deg f \cdot [M_1 \times M_2]^*_\Q 
    & = H^n(f;\Q)[M_1 \times M_2]^*_\Q \\
    & = \varepsilon \cdot H^n(f;\Q) \bigl( [M_1]^*_\Q \times [M_2]^*_\Q \bigr)\\
    & = \varepsilon \cdot H^n(f;\Q) \bigl( H^{n_1}(p_1;\Q) [M_1]^*_\Q \cup H^{n_2}(p_2;\Q)[M_2]^*_\Q \bigr)\\
    & = \varepsilon \cdot H^{n_1}(f_1;\Q)[M_1]^*_\Q \cup H^{n_2}(f_2;\Q)[M_2]^*_\Q, 
  \end{align*}
  where $\varepsilon := (-1)^{n_1 \cdot n_2}$.  In particular, the
  degree~$\deg f$ is uniquely determined by~$H^{n_1}(f_1;\Q)$ and
  $H^{n_2}(f_2;\Q)$, which in turn (by the universal coefficient
  theorem) are uniquely determined by~$H_{n_1}(f_1;\Q) \in F_1$ and
  $H_{n_2}(f_2;\Q) \in F_2$. Because the sets $F_1$ and $F_2$ are finite by the
  first step, we obtain that also $D(N,M_1\times M_2)$ is finite.
\end{proof}

%%%%%%%%%%%%%%%%%%%
\subsection{Strongly inflexible manifolds with trivial simplicial volume}\label{subsec:sinflexsimvol}

We will now give examples of stronlgy inflexible aspherical manifolds
whose simplicial volume is zero. We start with the case of
dimension~$3$ and then use inheritance of strong inflexibility under 
products to deal with the general case. 

\begin{exa}[dimension~$3$]\label{exa:sinflex3}
  Let $M$ be the total space of an orientable non-trivial $S^1$-bundle
  over an oriented closed connected surface of genus at
  least~$2$. Then $M$ is strongly inflexible (see
  Corollary~\ref{cor:3inflex} below) and aspherical. On the other
  hand, it is known that $\|M\| =0$ \cite{vbc,ivanov}.
\end{exa}

In combination with Proposition~\ref{prop:prod} we can now prove
Theorem~\ref{thm:stronglyinflexible}:

\begin{proof}[Proof of Theorem~\ref{thm:stronglyinflexible}]
  Let $M_1$ be an oriented closed connected aspherical $3$-manifold that is 
  strongly inflexible and satisfies~$\|M_1\| = 0$; such manifolds exist by 
  Example~\ref{exa:sinflex3}. 

  If $d \geq 5$, we take an oriented closed connected hyperbolic
  manifold~$M_2$ of dimension~$d-3$. Then $M_2$ is aspherical and
  $M_2$ is strongly inflexible. 

  In view of Proposition~\ref{prop:prod} also the product~$M_1 \times
  M_2$ is strongly inflexible. Moreover, by construction, $M_1 \times
  M_2$ is an oriented closed connected aspherical $d$-manifold and 
  we have~\cite{vbc} 
  \[ \| M_1 \times M_2\| \leq {d \choose 3} \cdot \|M_1\| \cdot \|M_2\| 
        = 0,
  \]
  as desired. 
\end{proof}

\begin{rem}
  Using fundamental properties of $\ell^1$-homology, one can see that
  the examples of strongly inflexible manifolds~$M$ constructed in the
  proof of Theorem~\ref{thm:stronglyinflexible} do not only have
  trivial simplicial volume but also are \mbox{\emph{$\ell^1$-invisible}},
  i.e., the image~$[M]^{\ell^1} \in H_*^{\ell^1}(M;\R)$ of~$[M]_\R$ in
  $\ell^1$-homology is the zero
  class~\cite[Example~6.7]{l1homology}. In other words,
  $\ell^1$-invisibility of manifolds does not imply weak flexiblity.
  Notice that it is an open problem whether all manifolds with trivial
  simplicial volume are $\ell^1$-invisible.
\end{rem}

We conclude this section with some open problems on strong
inflexibility. Generalising Example~\ref{exa:sinflex3}, one could ask:

\begin{question}
  Let $B$ be an oriented closed connected hyperbolic manifold and let
  $M \longrightarrow B$ be a non-trivial circle bundle over~$B$. Under
  which conditions will $M$ be strongly inflexible? 
\end{question}

In general, the fundamental class of~$B$ cannot be lifted to~$M$ and
hence does not provide a useful obstruction on~$M$. However, one could
try to use a lift in $\ell^1$-homology. More concretely: Let $p \colon
M \longrightarrow B$ be a circle bundle. Then the induced
map~$H_*^{\ell^1}(p;\R) \colon H^{\ell^1}_*(M;\R) \longrightarrow
H^{\ell^1}_*(B;\R)$ is an isometric isomorphism~\cite{l1homology}. In
particular, if $M$ and $B$ are oriented closed connected manifolds of
dimension~$n$ and~$n-1$ respectively, we obtain the codimension~$1$
class
\[ \alpha := H^{\ell^1}_{n-1}(p;\R)^{-1}([B]^{\ell^1}) \in H^{\ell^1}_{n-1}(M;\R)
\]
in $\ell^1$-homology of~$M$. For example, in the case that $B$ is a
hyperbolic surface and $p$ is a non-trivial bundle, this class was
considered by Derbez in the study of local rigidity of aspherical
$3$-manifolds~\cite{derbezlocal}. More generally, if $B$ is an oriented 
closed connected hyperbolic manifold, the class $\alpha$ will be 
non-trivial -- it will even have non-zero $\ell^1$-semi-norm. Can 
this class be used as an obstruction to prove strong inflexibility of~$M$\;?

At the other extreme, it also remains an open problem to determine whether 
there exist simply connected strongly inflexible manifolds (of non-zero dimension).

%%%%%%%%%%%%%%%%%%%%%%%%%%%%%%%%%%%%%%%%%%%%%%%%%%%%%%%%%%%%%%%%%%%%%%%
\subsection{Secondary simplicial volume}\label{subsec:secondarysimvol}

Secondary simplicial volume is a refinement of simplicial volume that 
allows to give refined information about vanishing of simplicial volume.

\begin{defi}[secondary simplicial volume]
  Let~$M$ be an oriented closed connected manifold of dimension~$n\in\N$. 
  The \emph{secondary simplicial volume of~$M$} is defined to
  be the integral sequence
  \begin{align*}
    \Sigma(M):=\bigl(\bigl\|k\cdot
    [M]_\Z\bigr\|_{1,\Z}\bigr)_{k\in\N},
  \end{align*}
  where $\|\cdot\|_{1,\Z}$ is the semi-norm on~$H_n(\args;\Z)$ induced
  by the $\Z$-valued \mbox{$\ell^1$-norm} on~$C_n(\args;\Z)$.
\end{defi}

\begin{rem}
  For all oriented closed connected manifolds~$M$ 
  the following holds~\cite[Remark~5.4]{ffsnrep}:
  \begin{align*}
    ||M||=\inf_{k\in\N_{>0}}{\frac{1}{k}\cdot
    \bigl\|k\cdot [M]_\Z\bigr\|_{1,\Z}}.
  \end{align*}
  In particular, vanishing of the simplicial volume of~$M$ can be 
  expressed in terms of the growth behaviour of the sequence~$\Sigma(M)$.
\end{rem}

 The strongest vanishing of simplicial volume occurs if the secondary
 simplicial volume contains a bounded subsequence. 
 We recall the complete geometric characterisation of such manifolds in
 Propositions~\ref{prop:bdsecsimvol} and~\ref{prop:trivsecsimvol} 
 in terms of flexibility.

\begin{prop}[{\cite[Corollary~5.5]{ffsnrep}}]\label{prop:bdsecsimvol}
  Let~$M$ be an oriented closed connected manifold of dimension~$n\in\N$.
  Then the following are equivalent:
  \begin{enumerate}
    \item The manifold~$M$ is weakly flexible.
    \item The secondary simplicial volume~$\Sigma(M)$ %of~$M$ 
      contains a
      bounded subsequence.
    \item All finite functorial semi-norms on~$H_n(\args;\R)$ vanish
      on~$[M]_\R$.
  \end{enumerate}
\end{prop}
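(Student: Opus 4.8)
The plan is to prove the cycle of equivalences $(1) \Rightarrow (3) \Rightarrow (2) \Rightarrow (1)$, using the interpretation of $\Sigma(M)$ via integral fundamental classes and the characterisation of finite functorial semi-norms in terms of mapping degrees.

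First I would treat $(1) \Rightarrow (3)$. Recall that weak flexibility of $M$ means, by definition, that there is no finite functorial semi-norm on $H_n(\args;\R)$ that is positive on $[M]_\R$; equivalently, every such semi-norm vanishes on $[M]_\R$. So this implication should essentially be a matter of unpacking definitions — the only care needed is to make sure the notion of ``weakly flexible'' used here coincides with ``$[M]_\R$ is not detected by any finite functorial semi-norm'', which I would state precisely and cite from the relevant earlier work.

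Next, $(3) \Rightarrow (2)$. Here the key point is to produce, from a bounded subsequence of $\Sigma(M)$ being \emph{absent}, an actual finite functorial semi-norm that is positive on $[M]_\R$. The idea is to use the construction of functorial semi-norms via manifold topology (the ``generalised functorial semi-norm'' construction recalled in Section~\ref{subsec:genfsn}): one defines a candidate semi-norm $|\cdot|$ on $H_n(\args;\R)$ by, roughly, measuring how efficiently a class can be represented by (multiples of) images of $[M]_\Z$. If $\Sigma(M)$ has no bounded subsequence, i.e.\ $\|k \cdot [M]_\Z\|_{1,\Z} \to \infty$, then dividing by $k$ and passing to $\R$-coefficients still leaves a positive value on $[M]_\R$ in the limit, so $|[M]_\R| > 0$. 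The work is in checking functoriality and finiteness of this semi-norm; I would lean on the cited framework rather than rebuild it.

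Finally, $(2) \Rightarrow (1)$: if $\Sigma(M) = (\|k\cdot[M]_\Z\|_{1,\Z})_{k\in\N}$ contains a bounded subsequence, I want to conclude weak flexibility, i.e.\ that \emph{every} finite functorial semi-norm vanishes on $[M]_\R$. Let $|\cdot|$ be a finite functorial semi-norm on $H_n(\args;\R)$. The strategy is to bound $|k\cdot[M]_\R|$ for the $k$ in the bounded subsequence: writing an efficient integral cycle representing $k\cdot[M]_\Z$ with few simplices, each simplex is (up to sign) the image of the fundamental class of the standard simplex, so by functoriality and subadditivity $|k\cdot[M]_\R| \leq \|k\cdot[M]_\Z\|_{1,\Z} \cdot c$ for a constant $c$ depending only on $|\cdot|$ in degree $n$ applied to the relevant simplex classes — in fact the right bound comes from applying $|\cdot|$ to the chain. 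Since $|k\cdot[M]_\R| = k \cdot |[M]_\R|$, dividing by $k$ and letting $k \to \infty$ along the bounded subsequence forces $|[M]_\R| = 0$. I expect the main obstacle to be making the constant $c$ in this last estimate genuinely uniform — that is, controlling $|\cdot|$ on the classes of the individual singular simplices appearing in the cycles, which is exactly where finiteness of the functorial semi-norm (as opposed to mere functoriality) is used, and where one should invoke the standard argument that a finite functorial semi-norm in a fixed degree is bounded on the set of simplex classes because every singular simplex extends over a fixed compact model.

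Since this statement is quoted verbatim from~\cite[Corollary~5.5]{ffsnrep}, in the paper itself I would simply refer to that reference; the sketch above records the shape of the argument for completeness.
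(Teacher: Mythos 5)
The paper itself offers no proof of this proposition: it is quoted verbatim from~\cite[Corollary~5.5]{ffsnrep}, so your closing remark that you would simply refer to that reference is exactly what the paper does, and is the safe option. The sketch you give in between, however, has genuine gaps and could not replace the citation. First, ``weakly flexible'' is \emph{not} defined as statement~(3); it is a mapping-degree condition (namely that $M$ fails to be strongly inflexible, i.e.\ $D(N,M)$ is infinite for some oriented closed connected $n$-manifold~$N$). With that definition, (1)$\Rightarrow$(3) is still easy, since $|\deg f|\cdot\bigl|[M]_\R\bigr|\leq\bigl|[N]_\R\bigr|<\infty$ for a finite functorial semi-norm forces $\bigl|[M]_\R\bigr|=0$ once the degrees are unbounded; but the converse (3)$\Rightarrow$(1) is real content, supplied by the domination semi-norm recalled in Section~\ref{subsec:genfsn}: if $M$ is strongly inflexible, then $|\cdot|_M$ is a \emph{finite} functorial semi-norm with $\bigl|[M]_\R\bigr|_M\geq 1$. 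Because you treat (1) and (3) as definitionally identical, your cycle of implications never produces maps of unbounded degree, so the genuine statement~(1) is never reached.

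Second, the key estimate in your step ``(2)$\Rightarrow$(1)'' (which is really an attempt at (2)$\Rightarrow$(3)) is not available: a singular $n$-simplex is not a cycle ($\Delta^n$ is contractible, so $H_n(\Delta^n;\R)=0$ for $n>0$), hence there are no ``classes of the individual singular simplices'' to which a functorial semi-norm on homology could be applied, and functorial semi-norms give no control on chains, so the subadditive bound $|k\cdot[M]_\R|\leq c\cdot\|k\cdot[M]_\Z\|_{1,\Z}$ cannot be obtained simplex by simplex. The actual mechanism behind~\cite{ffsnrep} is to factor an entire integral cycle of $\ell^1$-norm at most~$C$ through one of only finitely many finite models with at most $C$ top-dimensional simplices; then $k\cdot[M]_\Z$ is the image of one of finitely many universal classes, on which any finite functorial semi-norm attains a finite maximum $c(n,C)$, giving $k\cdot\bigl|[M]_\R\bigr|\leq c(n,C)$ along the bounded subsequence. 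Third, your sketch of (3)$\Rightarrow$(2) cannot work as described: normalising $\|k\cdot[M]_\Z\|_{1,\Z}$ by $k$ recovers in the limit essentially the simplicial volume, and the whole point of this paper (Theorem~\ref{thm:stronglyinflexible}) is that there exist strongly inflexible $M$ -- so $\Sigma(M)$ has no bounded subsequence -- with $\|M\|=0$; the finite functorial semi-norm witnessing the failure of~(3) for such $M$ is the domination semi-norm $|\cdot|_M$, i.e.\ one must pass through the mapping-degree statement~(1) rather than through any $\ell^1$-type quantity.
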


We will now focus on the $3$-dimensional case. 
The classification of weakly flexible 3-manifolds by Derbez, Sun, 
and Wang~\cite{dsw} translates into the following result:

\begin{cor}\label{cor:3inflex}
  Let~$M$ be an oriented closed connected $3$-manifold. Then the following
  are equivalent:
  \begin{enumerate}
    \item The secondary simplicial volume~$\Sigma(M)$ %of~$M$ 
      contains a
      bounded subsequence.
    \item Each prime summand of~$M$ is covered by a torus bundle
      over~$S^1$, by a trivial $S^1$-bundle or by~$S^3$.
  \end{enumerate}
  In particular, the $3$-manifolds from Example~\ref{exa:sinflex3} are
  strongly inflexible.
\end{cor}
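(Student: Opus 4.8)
The plan is to read the equivalence off Proposition~\ref{prop:bdsecsimvol} and the classification of weakly flexible $3$-manifolds due to Derbez, Sun, and Wang, and then to deduce the supplementary claim by a geometrisation argument.

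First, Proposition~\ref{prop:bdsecsimvol} says that, for an oriented closed connected $3$-manifold~$M$, the secondary simplicial volume~$\Sigma(M)$ contains a bounded subsequence if and only if $M$ is weakly flexible. On the other hand, Derbez, Sun, and Wang classify the weakly flexible oriented closed connected $3$-manifolds~\cite{dsw}: such a manifold is weakly flexible if and only if each of its prime summands is finitely covered by a torus bundle over~$S^1$, by a trivial $S^1$-bundle, or by~$S^3$. Combining these two equivalences yields the asserted equivalence of~(1) and~(2); the only point to check is that the formulation used in~\cite{dsw} matches the notion of weak flexibility employed here, which is routine after unwinding the definitions (and, in particular, after recalling how weak flexibility behaves with respect to the prime decomposition).

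For the supplement, let $M$ be as in Example~\ref{exa:sinflex3}, the total space of an orientable non-trivial $S^1$-bundle over a closed oriented surface~$B$ of genus at least~$2$. Pulling the bundle back along the universal covering~$\mathbb{H}^2 \to B$ shows that $\mathbb{H}^2 \times S^1$ is a covering space of~$M$, so the universal cover of~$M$ is~$\R^3$; in particular $M$ is aspherical, hence irreducible, hence prime, so its prime decomposition consists of~$M$ alone. Moreover $M$ is Seifert fibred over the hyperbolic base~$B$ with non-zero Euler number and thus admits a geometric structure modelled on~$\widetilde{\mathrm{SL}_2(\R)}$. Since Thurston geometries are preserved under passing to finite covers, and since none of the model geometries that occur for torus bundles over~$S^1$ ($\mathbb{E}^3$, Nil, Sol), for trivial $S^1$-bundles ($S^2 \times \R$, $\mathbb{E}^3$, $\mathbb{H}^2 \times \R$), or for~$S^3$ equals~$\widetilde{\mathrm{SL}_2(\R)}$, the manifold~$M$ is not finitely covered by any manifold of these three types. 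Hence $M$ fails condition~(2), so by the equivalence just proved $\Sigma(M)$ has no bounded subsequence, and Proposition~\ref{prop:bdsecsimvol} then provides a finite functorial semi-norm~$|\cdot|$ on~$H_3(\args;\R)$ that does not vanish on~$[M]_\R$. As $|\cdot|$ is finite, this gives $0 < |[M]_\R| < \infty$, and hence $M$ is strongly inflexible by the remark in Section~\ref{subsec:terminology}.

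The main obstacle I anticipate is expository rather than mathematical: verifying that~\cite{dsw} really is a classification of weakly flexible $3$-manifolds in the present sense (including the bookkeeping for connected sums), and, in the supplement, confirming that the three families listed in~(2) are exactly accounted for by the model geometries above. All the geometrisation input---uniqueness and cover-invariance of geometric structures, and the fact that non-trivial circle bundles over hyperbolic surfaces carry $\widetilde{\mathrm{SL}_2(\R)}$-structures---is classical and will only be cited.
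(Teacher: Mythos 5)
Your proof is correct and follows the same route the paper intends: the equivalence is obtained by combining Proposition~\ref{prop:bdsecsimvol} with the Derbez--Sun--Wang classification of weakly flexible $3$-manifolds, and the supplement follows because the manifolds of Example~\ref{exa:sinflex3} fail condition~(2). Your explicit $\widetilde{\mathrm{SL}_2}(\R)$-geometry argument (with ``covered by'' read as covered by a \emph{closed} torus bundle, a trivial $S^1$-bundle over a closed surface, or $S^3$, so that the covering is automatically finite) merely fills in a detail the paper leaves implicit, and concluding strong inflexibility via item~(3) of Proposition~\ref{prop:bdsecsimvol} together with the remark in Section~\ref{subsec:terminology} is an equally valid variant of deducing it from ``not weakly flexible''.
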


\begin{rem}
  In the previous section we observed that the $3$-manifolds from
  Example~\ref{exa:sinflex3} even give examples for $\ell^1$-invisible
  manifolds with non-trivial secondary simplicial volume. In contrast,
  we asked whether all manifolds with trivial secondary simplicial volume
  are $\ell^1$-invisible. In dimension~$3$, this easily follows from the
  characterization above and the following two facts~\cite[Example~6.7]{l1homology}:
  \begin{itemize}
    \item Total spaces of fibrations of oriented closed connected manifolds
    with fibre an oriented closed connected manifold that has amenable
    fundamental group are $\ell^1$-invisible.
    \item The connected sum of two $\ell^1$-invisible $3$-manifolds is
    $\ell^1$-invisible.
  \end{itemize}
\end{rem}

\begin{prop}[{\cite[Theorem~3.2]{smallisv}}]\label{prop:trivsecsimvol}
  Let~$M$ be an oriented closed connected manifold of dimension $n\in\N_{>0}$.
  Then the following are equivalent:
  \begin{enumerate}
    \item The secondary simplicial volume~$\Sigma(M)$ %of~$M$ 
      contains a
      bounded subsequence with bound~$1$.
    \item The manifold~$M$ is dominated by~$S^n$ and~$n$ is odd.
  \end{enumerate}
\end{prop}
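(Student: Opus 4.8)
The plan is to establish the two implications of the claimed equivalence, both of which hinge on a single elementary observation: if $n$ is odd, the quotient map $p\colon\Delta^n\to\Delta^n/\partial\Delta^n=S^n$ is a singular $n$-\emph{cycle} --- its boundary equals $\bigl(\sum_{i=0}^n(-1)^i\bigr)$ times the constant $(n-1)$-simplex at the basepoint, and $\sum_{i=0}^n(-1)^i=0$ exactly for odd $n$ --- and moreover $[p]=\pm[S^n]_\Z$, since the quotient map induces an isomorphism $H_n(\Delta^n,\partial\Delta^n;\Z)\to H_n(S^n;\Z)$ carrying the relative fundamental class to this cycle. In both directions, vanishing of integral simplicial volume ``with bound~$1$'' will be matched with domination by $S^n$ through this cycle.

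For (2) $\Rightarrow$ (1): assume $M$ is dominated by $S^n$ with $n$ odd and fix $f\in\map(S^n,M)$ with $\deg f=k\neq 0$. Given $j\in\N_{>0}$, pick a self-map $r_j$ of $S^n$ of degree $j$; then $f\circ r_j\circ p$ is a single singular $n$-simplex with vanishing boundary (since $p$ is a cycle), and with homology class $(f\circ r_j)_*[p]=\pm jk\cdot[M]_\Z$. Hence $\sv{jk\cdot[M]_\Z}_{1,\Z}\leq 1$; as $H_n(M;\Z)\cong\Z$ forces $\sv{m\cdot[M]_\Z}_{1,\Z}\geq 1$ for every $m>0$, we get $\sv{jk\cdot[M]_\Z}_{1,\Z}=1$ for all $j$, so $\Sigma(M)$ contains a subsequence bounded by~$1$.

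For (1) $\Rightarrow$ (2): assume $\Sigma(M)$ contains a subsequence bounded by~$1$. Since $k\cdot[M]_\Z$ vanishes only for $k=0$, there is $k\in\N_{>0}$ with $\sv{k\cdot[M]_\Z}_{1,\Z}=1$; fix a single singular $n$-simplex $\sigma\colon\Delta^n\to M$ with $\partial\sigma=0$ and $[\sigma]=\pm k\cdot[M]_\Z$. First I would deduce that $n$ is odd: writing $0=\partial\sigma=\sum_{i=0}^n(-1)^i\,d_i\sigma$ and grouping the faces $d_i\sigma$ according to which singular simplex they represent, each group contributes a vanishing alternating sum of signs, and summing over all groups gives $\sum_{i=0}^n(-1)^i=0$. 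Next, because $\sigma$ extends $\sigma|_{\partial\Delta^n}\colon S^{n-1}\to M$ over the disk $\Delta^n$, the map $\sigma|_{\partial\Delta^n}$ is null-homotopic; using the homotopy extension property of $(\Delta^n,\partial\Delta^n)$, I would extend a null-homotopy of $\sigma|_{\partial\Delta^n}$ to a homotopy of $\sigma$ ending at a simplex $\sigma'\colon\Delta^n\to M$ that is constant on $\partial\Delta^n$, hence of the form $\sigma'=\Psi\circ p$ for some $\Psi\colon S^n\to M$. The prism operator applied to this homotopy, together with $\partial\sigma=0$, shows that $\sigma'-\sigma$ is a boundary, so $[\sigma']=[\sigma]$; and by the pivot observation $[\sigma']=\Psi_*[p]=\pm(\deg\Psi)\cdot[M]_\Z$. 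Comparing the two expressions, $|\deg\Psi|=k\neq 0$, so $M$ is dominated by $S^n$, and $n$ is odd, as required.

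I expect the only step needing genuine care to be the straightening in (1) $\Rightarrow$ (2): replacing $\sigma$ by a homologous simplex that is constant on the boundary, and confirming via the chain-homotopy (prism) operator that its class in $H_n(M;\Z)$ is unchanged --- it is here that the degree of the resulting map $\Psi\colon S^n\to M$ is pinned down to $\pm k$, hence nonzero. Everything else is bookkeeping around the single parity fact. This route sidesteps the more elaborate approach via the closed oriented pseudomanifold obtained by pairing the codimension-one faces of $\Delta^n$: while that construction also produces a degree-$\pm k$ map onto $M$ from a space with a single $n$-cell, pushing it down to a map out of $S^n$ appears to require nontrivial rational-homotopy input (controlling the suspension of the attaching map of the top cell), whereas the straightening argument avoids this altogether.
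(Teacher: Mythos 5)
The paper itself does not prove this proposition (it is imported from \cite{smallisv}), so your argument has to stand on its own. Direction (2)\;$\Rightarrow$\;(1) is fine, as is the parity argument in (1)\;$\Rightarrow$\;(2). The gap is the ``straightening'' step: the claim that the homotoped simplex $\sigma'$ (constant on $\partial\Delta^n$) satisfies $[\sigma']=[\sigma]$. The prism operator gives
\[ \sigma'-\sigma \;=\; \partial P(\id_{\Delta^n}) \;+\; P(\partial\id_{\Delta^n}),\]
and $\partial\id_{\Delta^n}=\sum_{i=0}^n(-1)^i\iota_i$ is a \emph{nonzero} chain in $C_{n-1}(\Delta^n;\Z)$: the face inclusions $\iota_i$ are distinct singular simplices of $\Delta^n$ even though their composites with $\sigma$ cancel in pairs. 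The chains $P(\iota_i)$ depend on the restriction of your homotopy to the $i$-th face, and they cancel in pairs only if the chosen null-homotopy of $\sigma|_{\partial\Delta^n}$ is compatible with the face pairing. Arranging that is not automatic --- it amounts to null-homotoping the attaching map of the top cell of the quotient pseudomanifold, which is exactly the nontrivial input you say you are sidestepping. So ``$\partial\sigma=0$'' does not give $P(\partial\id_{\Delta^n})=0$, and $\sigma'-\sigma$ need not be a boundary.

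That the class genuinely changes is easy to see. Take the most natural straightening: $H(x,t)=\sigma(\rho_t(x))$, where $\rho_t\colon\Delta^n\to\Delta^n$ is a homotopy from $\id_{\Delta^n}$ to a surjective map $\rho_1$ collapsing $\partial\Delta^n$ to the barycentre. This is an instance of your construction, and $\sigma'=\sigma\circ\rho_1=\Psi\circ p$ with $\Psi$ factoring through the contractible space $\Delta^n$; hence $\deg\Psi=0$ and $[\sigma']=0\neq \pm k\cdot[M]_\Z=[\sigma]$. Similarly, for $n=1$ and $M=S^1$ one can choose the null-homotopy of the two boundary points so that the winding number of $\sigma'$ is any prescribed integer. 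So the degree of $\Psi$ is not pinned down to $\pm k$, and the implication (1)\;$\Rightarrow$\;(2) --- which really does assert the nontrivial fact that $k\cdot[M]_\Z$ is spherical --- is not established. Some version of the pairing/pseudomanifold argument, or another mechanism controlling the attaching map, appears unavoidable.
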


\begin{cor}
  Let~$M$ be an oriented closed connected 3-manifold. Then the following
  are equivalent:
  \begin{enumerate}
    \item The secondary simplicial volume~$\Sigma(M)$ %of~$M$ 
      contains a
      bounded subsequence with bound~$1$.
    \item The manifold~$M$ is spherical, i.e., finitely covered by~$S^3$.
  \end{enumerate}
\end{cor}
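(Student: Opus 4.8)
The plan is to deduce the corollary by combining Proposition~\ref{prop:trivsecsimvol} with the classification of closed $3$-manifolds dominated by~$S^3$. First I would note that, since $3$ is odd, the ``$n$ is odd'' clause in Proposition~\ref{prop:trivsecsimvol} is automatic, so for a closed connected oriented $3$-manifold~$M$ the condition that $\Sigma(M)$ contains a subsequence bounded by~$1$ is equivalent to: $M$ is dominated by~$S^3$. It therefore remains to show that a closed connected oriented $3$-manifold is dominated by~$S^3$ if and only if it is spherical (finitely covered by~$S^3$).

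The implication ``spherical $\Rightarrow$ dominated by~$S^3$'' is easy: if $p\colon S^3 \to M$ is a finite covering of degree~$k$, then $H_3(p;\Z)[S^3]_\Z = k\cdot[M]_\Z$, so $\deg p = k \neq 0$ and $S^3$ dominates~$M$. The substantive direction is ``dominated by~$S^3$ $\Rightarrow$ spherical''. Here I would argue as follows: a degree~$\neq 0$ map $f\colon S^3 \to M$ is surjective on rational homology and on~$\pi_1$ up to finite index, but since $\pi_1(S^3)$ is trivial this forces $\pi_1(M)$ to be finite. A closed $3$-manifold with finite fundamental group has universal cover a simply connected closed $3$-manifold, which by the Poincar\'e conjecture (Perelman) is~$S^3$; hence $M$ is finitely covered by~$S^3$, i.e.\ spherical. (One should also remark that a closed $3$-manifold dominated by~$S^3$ is automatically orientable and prime, but the finiteness-of-$\pi_1$ argument already suffices.)

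The main obstacle — really the only non-formal input — is the fact that domination by~$S^3$ forces $\pi_1(M)$ to be finite, together with the appeal to geometrisation/Poincar\'e to identify the universal cover. This is well known in the $3$-manifold literature (a map $S^n \to M$ of non-zero degree makes $\widetilde M$ a simply connected closed manifold dominated by~$S^n$, and in dimension~$3$ one invokes elliptisation), so in the paper I would simply cite the relevant source rather than reprove it. Everything else is a direct specialisation of Proposition~\ref{prop:trivsecsimvol} to $n=3$, so the corollary follows in a few lines.
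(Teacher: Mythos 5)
Your proposal is correct and follows essentially the same route as the paper: specialise Proposition~\ref{prop:trivsecsimvol} to $n=3$, observe that a non-zero degree map $S^3 \to M$ forces $\pi_1(M)$ to be finite (image of $\pi_1$ has finite index), and invoke Perelman's work (the paper cites the Elliptization Theorem, you pass through the Poincar\'e conjecture applied to the universal cover — the same input) to conclude that $M$ is finitely covered by~$S^3$, the converse direction being immediate from covering maps having non-zero degree.
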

\begin{proof}
  If $f \colon N \longrightarrow M$ is a map of non-trivial degree
  between oriented closed connected manifolds of the same dimension,
  then the image of~$\pi_1(f)$ is a finite index subgroup
  in~$\pi_1(M)$.  Therefore, it follows by the Elliptization Theorem,
  that domination by~$S^3$ and being finitely covered by~$S^3$ is
  equivalent for oriented closed connected $3$-manifolds.
\end{proof}

%%%%%%%%%%%%%%%%%%%%%%%%%%%%%%%%%%%%%%%%%%%%%%%%%%%%%%%%%%%%%%%
\section{Exotic finite functorial semi-norms}\label{sec:exotic}

We will first recall how strongly inflexible manifolds generate
interesting functorial semi-norms (Section~\ref{subsec:genfsn}).
Combining this construction with Theorem~\ref{thm:stronglyinflexible}
will then complete the proof of Theorem~\ref{thm:exotic}
(Section~\ref{subsec:exotic}).

%%%%%%%%%%%%%%%%%
\subsection{Generating functorial semi-norms via manifolds}\label{subsec:genfsn}

We will apply the following principle to generate exotic functorial
semi-norms:

\begin{prop}[domination semi-norm associated with a manifold~\protect{\cite[Section~7.1]{crowleyloeh}}]
  Let $d \in \N$ and let $M$ be an oriented closed connected
  $d$-manifold. Then there exists a functorial semi-norm~$|\cdot|_M$ 
  on~$H_d(\args;\R)$ satisfying
  \[ \bigl|[N]_\R\bigr|_M = \sup \bigl\{ |D| \bigm| D \in D(N,M)\bigr\} \in \R_{\geq 0} \cup \{\infty\}
  \]
  for all oriented closed connected $d$-manifolds~$N$. The functorial
  semi-norm~$|\cdot|_M$ is finite if and only if $M$ is strongly
  inflexible.
\end{prop}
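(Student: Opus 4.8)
The plan is to construct the semi-norm~$|\cdot|_M$ by hand from the mapping-degree data and then verify finiteness. First I would define, for every topological space~$X$ and every class~$\alpha \in H_d(X;\R)$,
\[ |\alpha|_M := \sup \bigl\{ |\deg f| \cdot |\lambda| \bigm| N \text{ oriented closed connected } d\text{-manifold},\
     g \colon N \longrightarrow X,\ \lambda \in \R,\ f \in \map(N,M),\ H_d(g;\R)[N]_\R = \lambda \cdot \alpha \bigr\},
\]
with the convention that the supremum of the empty set is~$0$. The intuition is that we only ``see'' the class~$\alpha$ through manifolds mapping onto (a multiple of) it, and we measure its size by how large a mapping degree to~$M$ such a manifold can realise, suitably rescaled by~$\lambda$.

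Next I would check the semi-norm axioms. Homogeneity~$|t\cdot\alpha|_M = |t|\cdot|\alpha|_M$ for~$t \in \R\setminus\{0\}$ is immediate by replacing~$\lambda$ with~$\lambda/t$; the case~$t = 0$ needs the observation that if~$H_d(g;\R)[N]_\R = 0$ then the corresponding contribution is~$|\deg f|\cdot 0 = 0$, so~$|0|_M = 0$. The triangle inequality is the only slightly delicate point: given a manifold witnessing~$\alpha + \beta$, there is no reason it splits; instead I would argue that if~$H_d(g;\R)[N]_\R = \lambda\cdot(\alpha+\beta)$ then this gives no direct bound, so one must be more careful — in fact the cleanest route is to note that subadditivity can fail for the naive supremum and that the correct definition (matching the cited reference) takes~$|\cdot|_M$ to be the \emph{largest} functorial semi-norm bounded above by the function~$N \mapsto \sup\{|D| : D \in D(N,M)\}$ on fundamental classes, obtained by a standard infimal-convolution / universal construction over all ways of writing~$\alpha$ as a finite~$\R$-linear combination of pushed-forward fundamental classes. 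Functoriality (norm non-increase under~$H_d(h;\R)$ for continuous~$h\colon X\to Y$) is then built in: any manifold mapping to~$X$ and realising a multiple of~$\alpha$ also maps to~$Y$ and realises the same multiple of~$H_d(h;\R)(\alpha)$, so the supremum defining~$|H_d(h;\R)(\alpha)|_M$ ranges over a larger set.

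Then I would compute~$|[N]_\R|_M$ for~$N$ an oriented closed connected $d$-manifold: taking~$g = \id_N$ and~$\lambda = 1$ shows~$|[N]_\R|_M \geq |D|$ for every~$D \in D(N,M)$, hence~$\geq \sup\{|D| : D\in D(N,M)\}$; for the reverse inequality one uses functoriality together with the remark already in the text that~$|\deg f|\cdot|[M]_\R|_M \leq |[N]_\R|_M$ — applied with~$N = M$ this pins down~$|[M]_\R|_M$ and bootstraps the general bound. Finally, finiteness: if~$M$ is strongly inflexible then~$D(N,M)$ is finite for every~$N$, so each~$|[N]_\R|_M$ is finite, and since every class in~$H_d(X;\R)$ is an~$\R$-linear combination of images of fundamental classes (Thom, rationally, plus the reduction used in Proposition~\ref{prop:prod}), finiteness propagates to all classes; conversely, if~$M$ is not strongly inflexible there is some~$N$ with~$D(N,M)$ unbounded, giving~$|[N]_\R|_M = \infty$. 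I expect the main obstacle to be making the triangle inequality genuinely correct, i.e.\ pinning down the right universal definition so that~$|\cdot|_M$ is actually subadditive while still realising the prescribed values on fundamental classes — this is exactly the point where one should lean on the construction in~\cite[Section~7.1]{crowleyloeh} rather than reinvent it.
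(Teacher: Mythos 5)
The paper itself does not prove this proposition---it quotes the construction from~\cite{crowleyloeh}, Section~7.1---so your proposal has to stand on its own, and it has a genuine gap. Your initial supremum definition of~$|\alpha|_M$ is not a functorial semi-norm: composing a witness $g\colon N \to X$ with a map $h\colon X \to Y$ yields a witness for~$H_d(h;\R)(\alpha)$ with the same contribution, so the supremum can only \emph{increase} under pushforward, which is the opposite of the inequality a functorial semi-norm must satisfy; moreover the weight~$|\deg f|\cdot|\lambda|$ scales the wrong way (it gives $|t\cdot\alpha|_M = |t|^{-1}\cdot|\alpha|_M$ for $t \neq 0$). You do notice the failure of subadditivity and switch to the right object, namely the largest functorial semi-norm bounded above by $v_M(N) := \sup\{|D| \mid D \in D(N,M)\}$ on fundamental classes (equivalently, the generated/infimal construction over decompositions). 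But your verification of the key identity $|[N]_\R|_M = v_M(N)$ then mixes the two definitions: ``taking $g = \id_N$ and $\lambda = 1$'' only makes sense for the abandoned supremum, whereas for the infimal construction the trivial decomposition gives the \emph{upper} bound $|[N]_\R|_M \leq v_M(N)$, not the lower one.

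That lower bound is the real content of the proposition, and your bootstrap via the Remark in Section~\ref{subsec:terminology} is circular: applied with $N = M$ it only says $|\deg f|\cdot|[M]_\R|_M \leq |[M]_\R|_M$, which is vacuous if $|[M]_\R|_M = 0$ and does not pin down the value---a priori the largest functorial semi-norm bounded above by~$v_M$ could vanish on all fundamental classes. What is needed (and what the cited construction supplies) is the explicit generated semi-norm $|\alpha|_M = \inf\bigl\{\sum_j |a_j|\cdot v_M(N_j) \bigm| \alpha = \sum_j a_j\cdot H_d(f_j;\R)[N_j]_\R\bigr\}$ together with a coefficient comparison: given such a decomposition of~$[N]_\R$ and any $h \in \map(N,M)$ of degree~$D$, push it forward along~$h$ and use $H_d(M;\R) = \R\cdot[M]_\R$ to obtain $D = \sum_j a_j\cdot \deg(h\circ f_j)$, hence $|D| \leq \sum_j |a_j|\cdot v_M(N_j)$ because $\deg(h\circ f_j) \in D(N_j,M)$; taking the infimum over decompositions and the supremum over~$h$ gives $|[N]_\R|_M \geq v_M(N)$, and in particular $|[M]_\R|_M \geq 1$. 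Once this equality is in place, your finiteness argument does go through: Thom representability~\cite{thom} plus universal coefficients shows every class in~$H_d(X;\R)$ is a finite combination of pushed-forward fundamental classes, so strong inflexibility yields finiteness of~$|\cdot|_M$ everywhere, while an infinite (hence unbounded) set~$D(N,M)$ forces $|[N]_\R|_M = \infty$.
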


For instance, in this way, functorial semi-norms on~$H_{64}(\args;\R)$
have been constructed that are non-trivial on certain classes of
simply connected spaces~\cite[Theorem~1.2]{crowleyloeh}. However, it
is not known whether these are examples of finite functorial
semi-norms.

%%%%%%%%%%%%%%%%%%%%%%%%%%%%
\subsection{Construction of exotic finite functorial semi-norms}\label{subsec:exotic}

We finally complete the proof of Theorem~\ref{thm:exotic}. 

\begin{proof}[Proof of Theorem~\ref{thm:exotic}]
  Let $d \in \{3\} \cup \N_{\geq 5}$. By Theorem~\ref{thm:stronglyinflexible} there 
  exists a strongly inflexible oriented closed connected $d$-manifold~$M$ with~$\| M \| = 0$. 
  Let $|\cdot|_M$ be the associated domination semi-norm on~$H_d(\args;\R)$; 
  because $M$ is strongly inflexible, this functorial semi-norm is indeed finite. 
  By construction, we have
  \[ \bigl| [M]_\R \bigr|_M = \sup \bigl\{ |D| \bigm| D \in D(M,M)\bigr\}  = 1, 
  \]
  but 
  $ \bigl\| [M]_\R \bigr\|_1 = \|M\| = 0.
  $
  In particular, $|\cdot|_M$ is \emph{not} carried by the
  $\ell^1$-semi-norm in the sense of Definition~\ref{def:carrier} 
  (this holds even on fundamental classes of aspherical manifolds).
\end{proof}

\begin{question}
  Does there exist a finite functorial semi-norm on~$H_d(\args;\R)$ that 
  carries all other finite functorial semi-norms on~$H_d(\args;\R)$\;?
\end{question}

%%%%%%%%%%%%%%%%%%%%%%%%%%%%%%%%%%%%%%%%%%%%%%%%%%%%%%%%%%%%%%%%

\medskip
\vfill

\noindent
\emph{Clara L\"oh}\\
\emph{Daniel Fauser}\\[.5em]
  {\small
  \begin{tabular}{@{\qquad}l}
    Fakult\"at f\"ur Mathematik, 
    Universit\"at Regensburg, 
    93040 Regensburg\\
    %Germany\\
    \textsf{clara.loeh@mathematik.uni-r.de}, 
    \textsf{http://www.mathematik.uni-r.de/loeh}\\
    \textsf{daniel.fauser@mathematik.uni-regensburg.de} 
  \end{tabular}}
\end{document}